\renewcommand{\top}{\rm top}
\newcommand{\con}{\mathrm{con}}
\def\cA{\mathcal A}
\def\cC{\mathcal C}
\def\cB{\mathcal B}
\def\ob{\mathrm{ob}}
\def\obc{\ob\cC}
\def\cD{\mathcal D}
\def\cE{\mathcal E}
\def\cF{\mathcal F}
\def\fall{\mathcal All}
\def\cG{\mathcal G}
\def\cL{\mathcal L}
\def\cS{\mathcal S}
\def\ass{\mathrm{Alg}}
\def\cat{\mathrm{Cat}}
\def\inj{\mathrm{inj}}
\def\spt{\mathrm{Spt}}
\def\hotimes{\hat{\otimes}}
\newcommand{\C}{\mathbb{C}}
\newcommand{\Q}{\mathbb{Q}}
\newcommand{\fin}{\mathcal{F}in}
\newcommand{\vcyc}{\mathcal{V}cyc}
\newcommand{\Z}{\mathbb{Z}}
\newcommand{\hofi}{\mathrm{hofiber}}
\newcommand{\org}{\mathrm{Or}G}
\def\lra{\longrightarrow}
\def\onto{\twoheadrightarrow}
\def\triqui{\vartriangleleft}
\def\weq{\overset\sim\lra}
\def\lweq{\overset\sim\longleftarrow}
\def\fibeq{\overset\sim\onto}
\numberwithin{equation}{section}
\theoremstyle{plain}
\newtheorem{thm}[equation]{Theorem}
\newtheorem{cor}[equation]{Corollary}
\newtheorem{prop}[equation]{Proposition}
\newcommand{\comment}[1]{}  
\theoremstyle{definition}
\theoremstyle{remark}
\newtheorem{rem}[equation]{Remark}
\newtheorem{exs}[equation]{Examples}
\newtheorem{nota}[equation]{Notation}
\newtheorem{stan}[equation]{Standing Assumptions}
\begin{document}

\title{Operator ideals and assembly maps in $K$-theory}
\author{Guillermo Corti\~nas}
\email{gcorti@dm.uba.ar}\urladdr{http://mate.dm.uba.ar/\~{}gcorti}
\author{Gisela Tartaglia}
\email{gtartag@dm.uba.ar}
\address{Dep. Matem\'atica-IMAS, FCEyN-UBA\\ Ciudad Universitaria Pab 1\\
1428 Buenos Aires\\ Argentina}
\thanks{The first author was partially supported by MTM2007-64704. Both authors
were supported by CONICET, and partially supported by
grants UBACyT 20020100100386, PIP 112-200801-00900 and MathAmSud
project U11Math05.}
\begin{abstract}
Let $\cB$ be the ring of bounded operators in a complex, separable Hilbert space. For $p>0$ consider the Schatten ideal
$\cL^p$ consisting of those operators whose sequence of singular values is $p$-summable; put $\cS=\bigcup_p\cL^p$. Let $G$ be a group and $\vcyc$ the family of virtually cyclic subgroups. Guoliang Yu proved that the $K$-theory assembly map
\[
H_*^G(\cE(G,\vcyc),K(\cS))\to K_*(\cS[G])
\]
is rationally injective. His proof involves the construction of a
certain Chern character tailored to work with coefficients $\cS$ and
the use of some results about algebraic $K$-theory of operator
ideals and about controlled topology and coarse geometry. In this
paper we give a different proof of Yu's result. Our proof uses the
usual Chern character to cyclic homology. Like Yu's, it relies on
results on algebraic $K$-theory of operator ideals, but no
controlled topology or coarse geometry techniques are used. We
formulate the result in terms of homotopy $K$-theory. We prove that
the rational assembly map
\[
H_*^G(\cE(G,\fin),KH(\cL^p))\otimes\Q\to KH_*(\cL^p[G])\otimes\Q
\]
is injective. We show that the latter map is equivalent to the
assembly map considered by Yu, and thus obtain his result as a
corollary.
\end{abstract}

\maketitle
\section{Introduction}
Let $G$ be a group; a \emph{family} of subgroups of $G$ is a
nonempty family $\cF$ closed under conjugation and under taking
subgroups. If $\cF$ is a family of subgroups of $G$,  then a
$G$-simplicial set $X$ is called a {\em $(G,\cF)$-complex} if the
stabilizer of every simplex of $X$ is in $\cF$. The category of
$G$-simplicial sets can be equipped with a closed model structure
where an equivariant map $X\to Y$ is a weak equivalence (resp. a
fibration) if $X^H\to Y^H$ is a weak equivalence (resp. a fibration)
for every $H\in\cF$ (see \cite[\S1]{corel}); the $(G,\cF)$-complexes
are the cofibrant objects in this model structure. By a general
construction of Davis and L\"uck (see \cite{dl}) any functor $E$
from the category $\Z$-$\cat$ of small $\Z$-linear categories to the
category $\spt$ of spectra which sends category equivalences to
equivalences of spectra gives rise to an equivariant homology theory
of $G$-spaces $X\mapsto H^G(X,E(R))$ for each unital ring $R$, such
that if $H\subset G$ is a subgroup, then
\begin{equation}\label{intro:cross}
H_*^G(G/H,E(R))=E_*(R[H])
\end{equation}
is just $E_*$ evaluated at the group algebra. The \emph{isomorphism
conjecture} for the quadruple $(G,\cF,E,R)$ asserts that if
$\cE(G,\cF)\fibeq pt$ is a $(G,\cF)$-cofibrant replacement of the
point, then the induced map
\begin{equation}\label{intro:assem}
H_*^G(\cE(G,\cF),E(R))\to E_*(R[G])
\end{equation}
--called \emph{assembly map}-- is an isomorphism. For the family
$\cF=\fall$ of all subgroups, \eqref{intro:assem} is always an
isomorphism. The appropriate choice of $\cF$ varies with $E$.  For
$E=K$, the nonconnective algebraic $K$-theory spectrum, one takes
$\cF=\vcyc$, the family of virtually cyclic subgroups. If $E=KH$ is
homotopy $K$-theory, one can equivalently take $\cF$ to be either
$\vcyc$ or the family $\fin$ of finite subgroups (\cite[Thm.
2.4]{blr}). If $E$ satisfies certain hypothesis, including excision,
one can make sense of the map \eqref{intro:assem} when $R$ is
replaced by any, not necessarily unital ring $A$. These hypothesis
are satisfied, for example when $E=KH$. Under milder hypothesis,
which are satisfied for example by $E=K$, \eqref{intro:assem} makes
sense for those coefficient rings $A$ which are \emph{$E$-excisive},
i.e. those for which $E$ satisfies excision (see \cite{corel} and
Subsection \ref{subsec:equigen} below).

The main result of this paper concerns the $KH$-assembly map for
$R=\cL^p$, the Schatten ideal. Recall that $\cL^p$ is an ideal of
the ring $\cB$ of bounded operators in a complex separable Hilbert
space; it consists of those operators whose sequence of singular
values is $p$-summable. Let $\cS=\bigcup_{p>0}\cL^p$; by \cite[Thm.
8.2.1]{CT} (see also \cite[Thm. 4]{wodalg}), $\cS$ is $K$-excisive.
Thus the assembly map  \eqref{intro:assem} with coefficients
$K(\cS)$ makes sense; Guoliang Yu proved in \cite{yu} that it is
rationally injective. His proof involves the construction of a
certain Chern character tailored to work with coefficients $\cS$ and
the use of some results about algebraic $K$-theory of operator
ideals (\cite{CT}, \cite{wodalg}), and about controlled topology and
coarse geometry from \cite{bfjr} and \cite{roe}.

In this paper we give a different proof of Yu's result. Our proof
uses the usual Chern character to cyclic homology. Like Yu's, it
relies on results about algebraic $K$-theory of operator ideals from
\cite{CT} and \cite{wodalg}, but no controlled topology or coarse
geometry techniques are used. We formulate the result in terms of
$KH$; we prove:

\begin{thm}\label{intro:thmkh}
Let $p>0$ and $G$ a group. Then the rational
assembly map
\[
H_*^G(\cE(G,\fin),KH(\cL^p))\otimes\Q\to KH_*(\cL^p[G])\otimes\Q
\]
is injective.
\end{thm}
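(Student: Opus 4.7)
The plan is to detect the rational $KH$-assembly map via the usual Chern character into negative cyclic homology, reducing the injectivity statement to analogous statements that can be verified by direct computation using the structure of $\cL^p$.

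First, since $\cL^p$ is $H$-unital by Wodzicki's theorem, excision is available for both $KH$ and negative cyclic homology $HN$, so both $KH(\cL^p\otimes-)$ and $HN(\cL^p\otimes-)$ extend to functors on $\Z$-$\cat$ sending category equivalences to equivalences of spectra. The Davis-L\"uck construction then produces equivariant homology theories $H_*^G(-,KH(\cL^p))$ and $H_*^G(-,HN(\cL^p))$, and the usual Chern character yields a natural transformation $\mathrm{ch}\colon KH\to HN$. After rationalization this gives a commutative square
\[
\xymatrix{
H_*^G(\cE(G,\fin),KH(\cL^p))\otimes\Q \ar[r]^-{\alpha_{KH}} \ar[d]_{\mathrm{ch}_*} & KH_*(\cL^p[G])\otimes\Q \ar[d]^{\mathrm{ch}_*} \\
H_*^G(\cE(G,\fin),HN(\cL^p))\otimes\Q \ar[r]^-{\alpha_{HN}} & HN_*(\cL^p[G])\otimes\Q.
}
\]
It suffices to show that the left vertical map and the bottom horizontal map are both injective.

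For the left vertical map, I would use the equivariant Atiyah-Hirzebruch spectral sequence for $H_*^G(\cE(G,\fin),-)$, whose $E^2$-page is built from Bredon-type homology with coefficients in the orbit functors $G/H\mapsto E_*(\cL^p[H])$ for $H\in\fin$. A comparison of spectral sequences reduces injectivity of $\mathrm{ch}_*$ on the domain to injectivity of the rational Chern character $KH_*(\cL^p[H])\otimes\Q\to HN_*(\cL^p[H])\otimes\Q$ for each finite subgroup $H\subset G$. Splitting $\C[H]\cong\prod_i M_{n_i}(\C)$ and using Morita invariance collapse this to injectivity of the Chern character for $\cL^p$ itself, which follows from Karoubi density together with Wodzicki's description of the cyclic-type homology of $\cL^p$.

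For the bottom horizontal map, I would apply a Burghelea-type decomposition of $HN_*(\cL^p[G])$ indexed by conjugacy classes $[g]\in G/\!\sim$, and match the contributions of torsion conjugacy classes piece-by-piece with the summands of $H_*^G(\cE(G,\fin),HN(\cL^p))\otimes\Q$ produced by the orbit-category spectral sequence. The non-torsion contributions lie outside the image of $\alpha_{HN}$ and so do not obstruct injectivity on the relevant part.

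The main obstacle is the bottom row: carrying out the Burghelea decomposition for the non-unital algebra $\cL^p[G]$ and identifying it cleanly with the equivariant-homology side. $H$-unitality of $\cL^p$ is what makes this feasible, since it delivers the excision needed to manipulate the relevant tensor products and localization sequences; a secondary difficulty is to arrange the Chern character as a natural transformation of Davis-L\"uck functors so that the square above genuinely commutes.
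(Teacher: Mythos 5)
Your overall shape --- a commutative square with the Chern character on the verticals, injectivity of the left vertical and of the bottom assembly map --- is the same as the paper's, but the specific choices you make leave two genuine gaps. The more serious one is the bottom row. You keep $\cL^p$ inside the cyclic-homology coefficients and must therefore prove that
$H_*^G(\cE(G,\fin),HN(\cL^p))\otimes\Q\to HN_*(\cL^p[G])\otimes\Q$
is injective. This requires a Burghelea-type decomposition of the \emph{algebraic} negative cyclic homology of $\cL^p[G]$ and control of the discrete cyclic-type homology of $\cL^p$ itself, which is large and not well understood ($HH$, $HC$, $HN$ of $\cL^p$ as a discrete $\C$-algebra are nothing like their topological counterparts, and the infinite products over conjugacy classes appearing in $HN$ and $HP$ create further $\lim^1$-type obstructions to injectivity). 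You flag this as ``the main obstacle'' but offer no argument, and $H$-unitality alone does not resolve it. The paper sidesteps the problem entirely: it composes the Chern character with the operator trace $Tr\colon\cL^1\to\C$, so that the bottom row becomes the assembly map for $HP(\C/\C)$ with values in $HP_*(\C[G]/\C)$; injectivity there (Proposition \ref{prop:asshp}) is a statement about field coefficients only, accessible via the classical conjugacy-class decomposition of $C(k[G]/k)$ and the computation $H^G(X,HP(k/k))\simeq\bigoplus_p H^G(X,HH(k/k))[2p]$ for $(G,\fin)$-complexes.

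The second gap is in your treatment of the left vertical map. A morphism of Atiyah--Hirzebruch spectral sequences that is injective on $E^2$ need \emph{not} be injective on the abutment: a class in the kernel on $E^3$ lifts to a class of $E^2$ whose image lies in the image of the target's $d_2$, which says nothing about the source's $d_2$. So reducing to injectivity of $KH_*(\cL^p[H])\otimes\Q\to HN_*(\cL^p[H])\otimes\Q$ for finite $H$ is not enough; you need the orbit-wise map to be an equivalence of spectra (or at least naturally split). This is exactly what the paper proves in Proposition \ref{prop:yug} and Corollary \ref{cor:yug}: after composing with the trace, the map $H^G(X,KH(\cL^p))\otimes\C\to H^G(X,HP(\C/\C))$ is an \emph{equivalence} for every $(G,\fin)$-complex $X$, using Morita theory for $\C[H]$, the Corti\~nas--Thom comparison $KH(\cL^1\hotimes A)\simeq K^{\top}(\cL^1\hotimes A)$, Cuntz's computation of $HP^{\top}$, and nilinvariance of $KH$ to pass between $\cL^p$ and $\cL^1$. ``Karoubi density plus Wodzicki's description of the cyclic homology of $\cL^p$'' does not substitute for these ingredients, since Karoubi density concerns topological $K$-theory of dense subalgebras and does not by itself control the algebraic Chern character into $HN$.
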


Yu's result follows as a corollary.

\begin{cor}\label{intro:cor}(\cite[Thm. 1.1]{yu}).
Let $G$ be any group and let $\cS=\bigcup_{p>0}\cL^p$ be the ring of
all Schatten operators. Then the rational assembly map
\[
H_*^G(\cE(G,\vcyc),K(\cS))\otimes\Q\to K_*(\cS[G])\otimes\Q
\]
is injective.
\end{cor}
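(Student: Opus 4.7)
The plan is to derive the corollary from Theorem \ref{intro:thmkh} in three steps: (i) swap the family $\fin$ for $\vcyc$ in the $KH$-assembly map; (ii) pass from $\cL^p$ to $\cS=\bigcup_{p>0}\cL^p$ via a filtered colimit over $p$; and (iii) identify, rationally, the $K$-theory assembly map with its $KH$-counterpart via the natural transformation $K\to KH$.

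Step (i) is immediate from \cite[Thm.~2.4]{blr}: for the functor $KH$, the assembly maps for the families $\fin$ and $\vcyc$ coincide. Combined with Theorem \ref{intro:thmkh}, this yields, for every $p>0$, rational injectivity of
\[
H_*^G(\cE(G,\vcyc),KH(\cL^p))\otimes\Q\to KH_*(\cL^p[G])\otimes\Q.
\]
For step (ii), note that $\cS=\colim_{p\to\infty}\cL^p$ is a filtered colimit of rings with injective structure maps. Both $KH$ and the Davis--L\"uck equivariant homology $H_*^G(\cE(G,\vcyc),KH(-))$ preserve filtered colimits in the coefficient ring, so the $KH$-assembly map for $\cS$ is the colimit over $p$ of those of step (i); since filtered colimits of injections of $\Q$-vector spaces are injections, rational injectivity survives, giving a rationally injective $KH$-assembly map for $\cS$ with family $\vcyc$.

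Step (iii) is the point of contact with Yu's formulation. The natural transformation $K\to KH$ induces a commutative square of assembly maps with coefficients $\cS$; a diagram chase shows that the result of step (ii) propagates to the $K$-assembly map provided that $K\to KH$ is a rational equivalence on $\cS[H]$ for every virtually cyclic subgroup $H\le G$. Indeed, by Davis--L\"uck and \eqref{intro:cross}, such equivalences upgrade to an isomorphism $H_*^G(\cE(G,\vcyc),K(\cS))\otimes\Q\cong H_*^G(\cE(G,\vcyc),KH(\cS))\otimes\Q$, which together with the injectivity from step (ii) forces the $K$-assembly map to be rationally injective as well. This rational $K$--$KH$ comparison on operator-ideal group algebras over virtually cyclic subgroups is the main technical ingredient, drawing on the $K$-theoretic properties of operator ideals established in \cite{CT} and \cite{wodalg}; it is the step where the principal obstacle lies.
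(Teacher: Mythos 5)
Your architecture --- (i) switch $\fin$ to $\vcyc$ for $KH$, (ii) pass from $\cL^p$ to $\cS$, (iii) compare $K$ with $KH$ on $\cS$-coefficients --- is exactly the paper's, and steps (i) and (ii) are fine: the paper does (i) via \cite[Thm.~7.4]{blr} at the level of $H^G(\cE(G,-),KH(\cS))$, and (ii) is even easier than your colimit-of-injections argument, since each map $KH(\cL^p[\cG^G(-)])\to KH(\cL^q[\cG^G(-)])$ is already an equivalence by nilinvariance (Corollary \ref{cor:yug}), so the filtered colimit defining $\cS$ is essentially constant on $KH$. The problem is step (iii): you reduce everything to the assertion that $K(\cS[H])\to KH(\cS[H])$ is a rational equivalence for $H\in\vcyc$, correctly identify this as the main technical ingredient, and then leave it unproved (``it is the step where the principal obstacle lies''). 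A proof that is conditional on its own hardest step is not yet a proof, so as written there is a genuine gap here.

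The paper closes this gap as follows. Since $\cS^2=\cS$ and the tensor product of operators carries $\cL^1\otimes\cS$ into $\cS$, the ideal $\cS$ is \emph{sub-harmonic} in the sense of \cite[Def.~6.5.1]{CT}; by \cite[Thm.~8.2.5, Rmk.~8.2.6]{CT} this implies that the comparison map $K(A\otimes_\C\cS)\to KH(A\otimes_\C\cS)$ is an equivalence --- integrally, not just rationally --- for \emph{every} H-unital $\C$-algebra $A$, in particular for $A=\C[H]$ with $H$ an arbitrary subgroup of $G$ (no restriction to virtually cyclic $H$ is needed). Via Remark \ref{arnuni} and the Standing Assumptions \ref{stan} this upgrades to an equivalence of $\org$-spectra $K(\cS[\cG^G(-)])\weq KH(\cS[\cG^G(-)])$, hence to the vertical equivalences in the square you describe, and your diagram chase (which correctly only requires the left-hand vertical to be an isomorphism) then goes through. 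One further point worth recording explicitly: for the $K$-assembly map with the nonunital coefficient ring $\cS$ to be defined at all, one needs $\cS$ to be $K$-excisive; this again follows from $\cS^2=\cS$ by \cite[Thm.~C]{sw} and \cite[Thm.~8.2.1]{CT}, and is stated in the paper just before the corollary.
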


The proof of the corollary makes it clear that the two assembly maps
are isomorphic, so it really is the same result.

\goodbreak

\smallskip

The rest of this paper is organized as follows. In Section
\ref{sec:equihom}, after recalling some general facts about
equivariant homology spectra, we focus on the cases of periodic
cyclic homology and Hochschild homology. For example, we show in
Proposition \ref{prop:hpcompu} that if $X$ is a $(G,\fin)$-complex
and $k\supset\Q$ a field, then
\[
H^G_n(X,HP(k/k))=\bigoplus_{p\in\Z}H_{n+2p}^G(X,HH(k/k) )
\]
We use this to show in Proposition \ref{prop:asshp} that the
assembly map
\begin{equation}\label{intro:asshp}
H^G_n(\cE(G,\fin),HP(k/k))\to HP_n(k[G]/k)
\end{equation}
is injective for every group $G$. In Section \ref{sec:chern} we
consider the Connes-Karoubi Chern character
\[
ch:H^G(X,KH(A))\to H^G(X,HP(A/k))
\]
defined in \cite[\S8]{corel}. We show in Proposition \ref{prop:yug}
that the composite of $ch$ with the operator trace gives an
equivalence
\[
c:H^G(X,KH(\cL^1))\otimes\C\weq H^G(X,HP(\C/\C))
\]
for every $(G,\fin)$-complex $X$. From this and the fact that
$KH_*(\cL^1)\cong KH_*(\cL^p)$ we deduce --in Corollary
\ref{cor:yug}-- that a similar equivalence which we also call $c$
holds for every $p>0$ :
\begin{equation}\label{intro:ciso}
c:H^G(X,KH(\cL^p))\otimes\C\weq H^G(X,HP(\C/\C))
\end{equation}
Section \ref{sec:kh} is concerned with Theorem \ref{intro:thmkh},
which we prove in Theorem \ref{thm:novikh}. The proof uses
\eqref{intro:ciso} and the injectivity of \eqref{intro:asshp}. Yu's
result \ref{intro:cor} is proved in Corollary \ref{cor:root}.

\begin{nota}
By a spectrum we understand a sequence $E=\{{}_nE:n\ge 1\}$ of
simplicial sets and bonding maps $\Sigma({}_nE)\to {}_{n+1}E$; thus
our notation differs from that of other authors (e.g. \cite{tho})
who use the term prespectrum for such an object. If $E,F:C\to \spt$
are functorial spectra, then by a (natural) \emph{map} $f:E\weq F$
we mean a zig-zag of natural maps
\[
E=Z_0\overset{f_1}\longrightarrow Z_1\overset{f_2}\longleftarrow
Z_2\overset{f_3}\longrightarrow\dots  Z_n=F
\]
such that each right to left arrow $f_i$ is an object-wise weak
equivalence. If also the left to right arrows are object-wise weak
equivalences, then we say that $f$ is a \emph{weak equivalence} or
simply an \emph{equivalence}. If $E$ and $F$ are spectra, we write
$E\oplus F$ for their wedge or coproduct. The Dold-Kan
correspondence associates a spectrum to every chain complex of
abelian groups. Although our notation does not distinguish a chain
complex from the spectrum associated to it, it will be clear from
the context which of the two we are referring to.
\goodbreak

Rings in this paper are not assumed unital, unless explicitly
stated. We use the letters $A,B$ for rings, and $R,S$ for unital
rings. If $X$ is a set, then $M_X$ is the ring of all matrices
$(z_{x,y})_{x,y\in X\times X}$ with integer coefficients, only
finitely many of which are nonzero. If $A$ is a ring, then
$M_XA=M_X\otimes A$; in particular $M_X\Z=M_X$. If $A$ and $B$ are
rings, then $A\oplus B$ is their direct sum as abelian groups,
equipped with coordinate-wise multiplication.

\goodbreak
\end{nota}

\section{Equivariant cyclic homology}\label{sec:equihom}
\numberwithin{equation}{subsection}
\subsection{Equivariant homology of simplicial
sets}\label{subsec:equigen} Let $k$ be a field. A \emph{$k$-linear
category} is a small category enriched over the category of
$k$-vector spaces. We write $k$-$\cat$ for the category of
$k$-linear categories and $k$-linear functors. Observe that, by
regarding a unital $k$-algebra as $k$-linear category with one
object, we obtain a fully faithful embedding of $k$-algebras into
$k$-$\cat$. Let $\cC\in k$-$\cat$, consider the $k$-module
\[
\cA(\cC)=\bigoplus_{x,y\in \cC}\hom_\cC(x,y)
\]
If $f\in \cA(\cC)$ write $f_{a,b}$ for the component in
$\hom_\cC(b,a)$. The following multiplication law
\begin{equation}\label{rule:matrix}
(fg)_{a,b}=\sum_{c\in\obc}f_{a,c}g_{c,b}
\end{equation}
makes $\cA(\cC)$ into an associative $k$-algebra, which is unital if
and only if $\obc$ is finite. Whatever the cardinal of $\obc$ is,
$\cA(\cC)$ is always a ring with {\it local units}, i.e. a filtering
colimit of unital rings. We call $\cA(\cC)$ the \emph{arrow ring} of
$\cC$. If  $F:\cC\mapsto \cD$ is a $k$-linear functor which is
injective on objects, then it defines a homomorphism
$\cA(F):\cA(\cC)\to \cA(\cD)$ by the rule $\alpha\mapsto F(\alpha)$.
Hence we may regard $\cA$ as  a functor
\begin{equation}\label{fun:ac}
\cA:\inj\emph{-k-}\cat\to \emph{k-}\ass
\end{equation}
from the category of $k$-linear categories and functors which are
injective on objects, to the category of $k$-algebras. However
$\cA(F)$ is not defined for general $k$-linear $F$. Let
$E:k$-$\cat\to\spt$ be a functor. If $R$ is a unital $k$-algebra and
$I\triqui R$ is a $k$-ideal, we put
\[
E(R:I)=\hofi(E(R)\to E(R/I))
\]
Thus if we assume $E(0)\weq *$, we have $E(R:R)\weq E(R)$. We say
that $E$ is \emph{finitely additive} if the canonical map

\[
E(\cC)\oplus E(\cD)\to E(\cC\oplus\cD)
\]

is an equivalence. We assume from now on that $E$ is finitely
additive. If $A$ is a not necessarily unital $k$-algebra, write
\[
\tilde{A}_k=A\oplus k
\]
for the unitization of $A$ as a $k$-algebra. Put
\[
E(A)=E(\tilde{A}_k:A)
\]
If $A$ happens to be unital, we have two definitions for $E(A)$;
they are equivalent by \cite[Lemma 1.1]{kabi}. A not necessarily
unital ring $A$ is called \emph{$E$-excisive} if for any embedding
$A\triqui R$ as an ideal of a unital $k$-algebra, the canonical map
\[
E(A)\to E(R:A)
\]
is an equivalence.

\begin{stan}\label{stan}
We shall henceforth assume that $E:k$-$\cat\to\spt$ satisfies each of
the following.
\goodbreak

\smallskip

\begin{itemize}

\item[i)] Every algebra with local units is $E$-excisive.

\item[ii)] If $H$ is a group and $A$ an $E$-excisive algebra, then $A[H]$ is
$E$-excisive.

\item[iii)] If $A$ is $E$-excisive, $X$ a set and $x\in X$, then $M_{X}A$ is $E$-excisive, and $E$ sends the map
$A\to M_{X}A$, $a\mapsto e_{x,x}a$ to a weak equivalence.

\item[iv)] There is a natural weak equivalence $E(\cA(\cC))\weq E(\cC)$ of functors \\ $\inj$-$k$-$\cat\to\spt$.

\item[v)] Let $A$ and $B$ be algebras, and let $C=A\oplus B$
be their direct sum, with coordinate-wise multiplication. Then $C$
is $E$-excisive if and only if  both $A$ and $B$ are. Moreover if
these equivalent conditions are satisfied, then the map $E(A)\oplus
E(B)\to E(C)$ is an equivalence.
\end{itemize}
\end{stan}

\begin{exs}\label{exs:hck}
The assumptions above can be formulated for linear categories over
any commutative, unital ground ring $k$. The (nonconnective)
$K$-theory spectrum $K$ satisfies the standing assumptions for
$k=\Z$ as well as for any field $k$ of characteristic zero
(\cite[Prop. 4.3.1, Prop. 6.4]{corel}). The homotopy $K$-theory
spectrum $KH$ is \emph{excisive}, i.e. every ring is $KH$-excisive
\cite{kh}. Furthermore it satisfies the assumptions over any ground
ring $k$ (\cite[Prop. 5.5]{corel}). A $k$-linear category $\cC$ has
associated a canonical cyclic $k$-module $C(\cC/k)$ (\cite{machc})
with
\[
C(\cC/k)_n=\bigoplus_{(c_0,\dots,c_n)\in\obc^{n+1}}\hom_\cC(c_1,c_0)\otimes_k\dots\otimes_k\hom_\cC(c_0,c_n)
\]
The Hochschild, cyclic, negative cyclic and periodic cyclic homology
of $\cC$ over $k$ are the respective homologies of $C(\cC/k)$; they
are denoted $HH(/k)$, $HC(/k)$, $HN(/k)$ and $HP(/k)$. Both $HH(/k)$
and $HC(/k)$ satisfy the assumptions above when $k$ is any field
\cite[Prop. 6.4]{corel}. If $k$ is a field of characteristic zero,
then $HP(/k)$ is excisive \cite{cq}; furthermore, it satisfies the
standing assumptions because $HC(/k)$ does. It follows that also
$HN(/k)$ satisfies the assumptions.
\end{exs}

Let $G$ be a group, and $S$ a $G$-set. Write $\cG^G(S)$ for its
\emph{transport groupoid}. By definition $\ob\cG^G(S)=S$, and
$\hom_{\cG^G(S)}(s,t)=\{g\in G:g\cdot s=t\}$. If $R$ is a unital
$k$-algebra we consider a small category $R[\cG^G(S)]$. The objects
of $R[\cG^G(S)]$ are those of $\cG^G(S)$ and
\[
hom_{R[\cG^G(S)]}(s,t) = R\otimes\Z[hom_{\cG^G(S)}(s,t)]
\]
with the obvious composition rule. Note that $R[\cG^G(S)]$ is a
$k$-linear category. We write $\org$ for the orbit category of $G$;
its objects are the $G$-sets $G/H$, $H\subset G$ a subgroup; its
homomorphisms are the $G$-equivariant maps. The rule $G/H\mapsto
R[\cG^G(G/H)]$ defines a functor $\org\to k$-$\cat$.
\begin{rem}\label{arnuni}
If $A$ is any, not necessarily unital $k$-algebra we put
\[
\cA(A[\cG^G(G/H)])= \ker (\cA(\tilde{A}_k[\cG^G(G/H)]) \to \cA(k[\cG^G(G/H)]))
\]
Note that $\cA(A[\cG^G(G/H)])$ is always defined, even though $A[\cG^G(G/H)]$ is not. Morover, by \cite[Lemma 3.2.6]{corel} there is an isomorphism $$\cA(A[\cG^G(G/H)]) \weq M_{G/H}(A[H]).$$
\end{rem}

\bigskip

\noindent
If $R$ is a unital algebra and $I\triqui R$ is a $k$-ideal, put
\[
E(R[\cG^G(G/H)]:I[\cG^G(G/H)])=\hofi(E(R[\cG^G(G/H)])\to
E((R/I)[\cG^G(G/H)]))
\]
In the particular case when $A$ is an $E$-excisive algebra, we set
\[
E(A[\cG^G(G/H)]) = E(\tilde{A}_k[\cG^G(G/H)]:A[\cG^G(G/H)])
\]
If $X$ is a $G$-simplicial set, we consider the coend of spectra
\begin{equation}\label{dlequihomo}
H^G(X,E(R:I))=\int^{\org}X^H_+\land E(R[\cG^G(G/H)]:I[\cG^G(G/H)])
\end{equation}
To abbreviate notation, we write $H^G(X,E(R))$ for $H^G(X,E(R:R))$.
The spectrum $H^G(X,E(R))$ is a simplicial set version of the
Davis-L\"uck equivariant homology spectrum associated with $E$
(\cite{dl},\cite{corel}). We have a fibration sequence
\[
H^G(X,E(R:I))\to H^G(X,E(R))\to H^G(X,E(R/I))
\]
If $A$ is $E$-excisive, put
\begin{equation}\label{equihomoexci}
H^G(X,E(A)):=H^G(X,E(\tilde{A}_k:A))
\end{equation}
By \cite[Prop. 3.3.9]{corel}, if
\[
0\to A'\to A\to A^{\prime\prime}\to 0
\]
is an extension of $E$-excisive algebras, then
\begin{equation}\label{Efibseq}
E(A'[\cG^G(-)])\to E(A[\cG^G(-)])\to E(A^{\prime\prime}[\cG^G(-)])
\end{equation}
and
\begin{equation}\label{Efibseq2}
H^G(X,E(A'))\to H^G(X,E(A))\to H^G(X,E(A^{\prime\prime}))
\end{equation}
are homotopy fibrations.

\subsection{Equivariant periodic cyclic homology of
$(G,\fin)$-complexes}\label{subsec:equihp}
\begin{prop}\label{prop:hpcompu}
Let $X$ be a $(G,\fin)$-complex, and let $k\supset \Q$ be a field.
There is a natural quasi-isomorphism
\[
\bigoplus_{p\in\Z}H^G(X,HH(k/k))[2p]\weq H^G(X,HP(k/k))
\]
\end{prop}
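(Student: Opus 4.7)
The plan is to construct a natural transformation
\[
\Phi_\cC:\bigoplus_{p\in\Z} HH(\cC/k)[2p]\to HP(\cC/k)
\]
of functors on $k$-$\cat$, verify that it is a quasi-isomorphism on each $k$-linear category of the form $k[\cG^G(G/H)]$ with $H\in\fin$, and conclude by applying the Davis-L\"uck coend $\int^{\org}X^H_+\wedge-$. Since a $(G,\fin)$-complex $X$ is built from $G$-cells of type $G/H$ with $H\in\fin$, the coend is homotopy invariant under natural quasi-isomorphisms checked at such orbits, so this suffices.

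For the natural transformation, I would use the spectral sequence associated to the periodic cyclic bicomplex computing $HP(\cC/k)$. Its columns are copies of the Hochschild complex $(C(\cC/k),b)$, connected by the horizontal Connes differential $B$, giving a column spectral sequence
\[
E^1_{p,q}=HH_q(\cC/k)\ \Rightarrow\ HP_{p+q}(\cC/k),
\]
which is $2$-periodic in $p$. The $E_1$-edge morphism together with the Connes periodicity operator $S:HP(\cC/k)\weq HP(\cC/k)[-2]$ (an isomorphism on $HP$) assembles via a natural zigzag into a map $HH(\cC/k)[2p]\to HP(\cC/k)$ for each $p\in\Z$; summing over $p$ yields $\Phi_\cC$.

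For the pointwise verification at $\cC=k[\cG^G(G/H)]$ with $H\in\fin$, Remark \ref{arnuni} identifies $\cA(k[\cG^G(G/H)])$ with $M_{G/H}(k[H])$. Standing assumption (iv) and the matrix-invariance in (iii) then reduce the computation to $HH(k[H]/k)$ and $HP(k[H]/k)$. Since $k\supset\Q$ and $H$ is finite, Maschke's theorem shows $k[H]$ is finite-dimensional semisimple, whence $HH_n(k[H]/k)=0$ for $n>0$ and $HH_0(k[H]/k)$ is the finite-dimensional space of conjugation coinvariants. The spectral sequence therefore collapses trivially at $E_1$ (only one row is nonzero), yielding $HP_n(k[H]/k)\cong HH_0(k[H]/k)$ for $n$ even and $0$ for $n$ odd; this matches the homology of $\bigoplus_p HH(k[H]/k)[2p]$, and $\Phi$ realizes the identification by construction. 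The main obstacle is assembling the edge morphism and the periodicity into a \emph{functorial} zigzag at the level of arrow rings; granting this, the rest reduces to the standard semisimple computation and the formal homotopy invariance of the Davis-L\"uck coend.
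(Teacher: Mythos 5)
Your pointwise computation is fine: Morita invariance plus Maschke's theorem does give $HH_n(k[\cG^G(G/H)]/k)=0$ for $n>0$ and $H\in\fin$, which is exactly the observation the paper uses at the end of its own proof (separability of $k[K]$ for $K$ finite). The gap is in your first step. There is no natural transformation $\Phi_\cC:\bigoplus_{p}HH(\cC/k)[2p]\to HP(\cC/k)$ of functors on all of $k$-$\cat$: already for the one-object category $\cC=k[x]$ one has $HH_0(k[x]/k)=k[x]$ while $HP_0(k[x]/k)=k$, and no map $k[x]\to k$ can be natural under all $k$-algebra endomorphisms of $k[x]$ (compare $x\mapsto x$ with $x\mapsto x+1$). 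Your proposed construction cannot circumvent this because the edge maps of the column filtration of the periodic bicomplex point the wrong way: a single Hochschild column is a \emph{subquotient} of the periodic total complex, so the filtration naturally produces maps from $HP$ (or $HN$) toward Hochschild data, not maps $HH(\cC/k)[2p]\to HP(\cC/k)$; the periodicity operator $S$ acts on $HP$ and $HC$ but does not reverse this direction. So the step you defer with ``granting this'' is not a technicality --- it is the actual mathematical content of the proposition.

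A repair along your lines is possible but must be restricted to the relevant diagrams: once one knows $HH_{>0}(\cC/k)=0$ (as for $\cC=k[\cG^G(G/H)]$, $H\in\fin$), the columnwise surjection $C(\cC/k)\to HH_0(\cC/k)[0]$ is a quasi-isomorphism under which the horizontal differential $B$ becomes zero, giving a natural zigzag $\bigoplus_p C(\cC/k)[2p]\weq\bigoplus_p HH_0(\cC/k)[2p]\lweq \tot^{\Pi}$ of the periodic bicomplex, with both arrows quasi-isomorphisms; one must still check that the columnwise equivalence passes to the product total complex (a convergence point your spectral-sequence sketch also elides). The paper takes a different and more explicit route: it decomposes $C(k[\cG^G(X)]/k)$ over conjugacy classes, identifies the summand for $[g]$ of finite order with the group-homology cyclic module $H(Z_g,k[X^g])$, passes to $Z_g/\langle g\rangle$ precisely in order to trivialize the cyclic operator (hence $B$), applies Weibel's computation of $HP$ of a trivial mixed complex, and finally converts the resulting product $\prod_p$ into $\bigoplus_p$ using the vanishing of higher equivariant $HH$. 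Either way, an argument constructing the comparison map is required and is missing from your proposal.
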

\begin{proof}
It suffices to show that there is a quasi-isomorphism
\[
\bigoplus_{p\in\Z}H^G(G/H,HH(k/k))[2p]\weq H^G(G/H,HP(k/k))
\]
for $H\in\fin$, natural with respect to $G$-equivariant maps. In
particular we may restrict to proving the proposition for $X$ a
discrete, $G$-finite $(G,\fin)$-complex. The cyclic module
$C(k[\cG^G(X)]/k)$ decomposes into a direct sum of cyclic modules
\cite[7.1]{corel}
\[
C(k[\cG^G(X)]/k)=\bigoplus_{[g]\in\con G}C^{[g]}(k[\cG^G(X)]/k)
\]
Here the direct sum runs over the set $\con G$ of conjugacy classes
of elements of $G$. Because $X$ is a $(G,\fin)$-complex,
$C^{[g]}(k[\cG^G(X)]/k)=0$ for $g$ of infinite order. So assume $g$
is of finite order. Let $Z_g\subset G$ be the centralizer subgroup.
By \cite[Lemma 7.2]{corel} (see also \cite[Cor. 9.12]{LR1}), there is a natural quasi-isomorphism of
cyclic $k$-modules
\[
H(Z_g,k[X^g])\to C^{[g]}(k[\cG^G(X)]/k)
\]
Here the domain has the cyclic structure given by
\begin{gather*}
t_n:H(Z_g,k[X^g])_n=k[Z_g]^{\otimes n}\otimes_k k[X^g]\to
H(Z_g,k[X^g])_n\\
t_n(z_1\otimes\dots\otimes z_n\otimes x)=(z_1\cdots z_n)^{-1}g\otimes
z_1\otimes\dots\otimes z_{n-1}\otimes z_n(x)
\end{gather*}
Let $\langle g\rangle\subset Z_g$ be the cyclic subgroup. A Serre
spectral sequence argument (see the proof of \cite[Lemma
9.7.6]{chubu}) shows that the projection
\[
H(Z_g,k[X^g])\to H(Z_g/\langle g\rangle,k[X^g])
\]
is a quasi-isomorphism of cyclic $k$-modules. Summing up we have a
natural zig-zag of quasi-isomorphisms of cyclic modules
\begin{equation}\label{map:alfiso}
\bigoplus_{[g]\in\con_f G}H(Z_g/\langle
g\rangle,k[X^g])\lweq\bigoplus_{[g]\in\con_f
G}H(Z_g,k[X^g])\weq C(k[\cG^G(X)]/k)
\end{equation}
Here $\con_fG$ is the set of conjugacy classes of elements of finite order.
We remark that, because $X$ is a finite $(G,\fin)$-complex by
assumption, the direct sums above have only finitely many nonzero
summands. By \cite[Corollary 9.7.2]{chubu}, we have a natural
equivalence
\[
HP(H(Z_g/\langle g\rangle,k[X^g]))\weq \prod_{p\in\Z}H(Z_g/\langle
g\rangle,k[X^g])[2p]
\]
Summing up we have a natural quasi-isomorphism
\begin{equation}\label{map:hpgx}
\prod_{p \in \Z}\left(\bigoplus_{[g]\in\con_f G}
H(Z_g,k[X^g])\right)[2p]\weq HP(k[\cG^G(X)/k])\weq H^G(X,HP(k/k))
\end{equation}
Taking into account \eqref{map:alfiso} we obtain a quasi-isomorphism
of chain complexes
\begin{equation}\label{map:equihomox}
\bigoplus_{[g]\in\con_f G} H(Z_g,k[X^g])\weq H^G(X,HH(k/k))
\end{equation}
Moreover, in \eqref{map:hpgx} we can replace $\prod_{p\in\Z}$ by
$\bigoplus_{p \in \Z}$
 because $H_n^G(X,HH(k/k))=0$ for $n\ne 0$. Indeed, $X$ is a finite
disjoint union of homogeneous spaces $G/K$ with $K \in \fin$, and
\begin{equation*}
H_n^G(G/K,HH(k/k))=H_n^K(K/K,HH(k/k))=HH_n(k[K]/k)
\end{equation*}
which is zero in positive dimensions since $k[K]$ is separable for
finite $K$. This concludes the proof.
\end{proof}

\begin{prop}\label{prop:asshp}(cf. \cite[Rmk. 1.9]{LR1})
If $k\supset\Q$ is a field, then the assembly map
\[
H_*^G(\cE(G,\fin),HP(k/k))\to HP_*(k[G]/k)
\]
is injective.
\end{prop}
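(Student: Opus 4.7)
The plan is to compose the assembly map with the natural projection
\[
\pi\colon HP_n(k[G]/k)\longrightarrow\prod_{[g]\in\con G}HP_n\bigl(C^{[g]}(k[G]/k)\bigr)
\]
induced by the conjugacy-class decomposition $C(k[G]/k)=\bigoplus_{[g]\in\con G}C^{[g]}(k[G]/k)$, and to observe that the composite is the tautological inclusion of a direct sum into a product, hence injective.

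For the source, I would use Proposition \ref{prop:hpcompu} combined with the quasi-isomorphism \eqref{map:equihomox} to obtain
\[
H^G_n(\cE(G,\fin),HP(k/k))\;\cong\;\bigoplus_{p\in\Z}\bigoplus_{[g]\in\con_f G} H_{n+2p}\bigl(Z_g,k[\cE(G,\fin)^g]\bigr).
\]
For $g$ of finite order, $\cE(G,\fin)^g$ is a contractible $Z_g$-CW complex with finite stabilizers (a model for $\cE(Z_g,\fin)$); since $k\supset\Q$, a Cartan--Leray/transfer argument identifies $H_*(Z_g,k[\cE(G,\fin)^g])$ with $H_*(Z_g,k)$. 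So the source simplifies to $\bigoplus_{p}\bigoplus_{[g]\in\con_f G}H_{n+2p}(Z_g,k)$.

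For each factor $HP_n(C^{[g]}(k[G]/k))$ with $[g]\in\con_f G$, the Serre spectral sequence argument in the proof of Proposition \ref{prop:hpcompu}, applied to a single conjugacy-class component (which uses only that $\langle g\rangle$ is finite, not that the ambient space is a $(G,\fin)$-complex), produces a quasi-isomorphism of cyclic $k$-modules $H(Z_g/\langle g\rangle,k)\weq C^{[g]}(k[G]/k)$. Combined with \cite[Cor. 9.7.2]{chubu} this yields $HP_n(C^{[g]}(k[G]/k))\cong\prod_p H_{n+2p}(Z_g,k)$. The components of $\pi$ indexed by $[g]$ of infinite order do not interfere, as the source has no contributions in those summands. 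By naturality of \eqref{map:alfiso} along the $G$-map $\cE(G,\fin)\to pt$, the composite of the assembly map with $\pi$ is identified with the canonical inclusion
\[
\bigoplus_{p\in\Z}\bigoplus_{[g]\in\con_f G}H_{n+2p}(Z_g,k)\;\hookrightarrow\;\prod_{[g]\in\con_f G}\prod_{p\in\Z}H_{n+2p}(Z_g,k),
\]
which is injective; hence so is the assembly map.

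The main obstacle I foresee is the naturality bookkeeping: one must fit the several quasi-isomorphisms---Proposition \ref{prop:hpcompu}, \eqref{map:alfiso}, and \cite[Cor. 9.7.2]{chubu}---into a single commutative diagram in which the direct-sum indexing ``$\bigoplus_p$'' available on the source (because $\cE(G,\fin)$ is a $(G,\fin)$-complex) maps to the product indexing ``$\prod_p$'' arising on each $[g]$-factor of the target in the claimed tautological way.
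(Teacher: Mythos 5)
Your proposal is correct and follows essentially the same route as the paper: compose the assembly map with the projection of $HP_*(k[G]/k)$ onto the product of the finite-order conjugacy-class components, compute each component as $\prod_m H_{n+2m}(Z_g,k)$ via the quasi-isomorphism $C^{[g]}(k[G]/k)\simeq H(Z_g/\langle g\rangle,k)$ and \cite[Cor.~9.7.2]{chubu}, identify the source via Proposition~\ref{prop:hpcompu} and \eqref{map:equihomox}, and recognize the composite as the inclusion of a direct sum into a product. The only cosmetic difference is your appeal to a Cartan--Leray/transfer argument where the paper simply uses that $\cE(G,\fin)^g\to pt$ is an equivalence (so $k[\cE(G,\fin)^g]\to k$ is a quasi-isomorphism of $Z_g$-module complexes); both give the same identification.
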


\begin{proof}
The inclusion
\[
C(k[G]/k)=\bigoplus_{[g]\in\con G}C^{[g]}(k[G]/k)\subset
\prod_{[g]\in\con G}C^{[g]}(k[G]/k)
\]
induces a chain map $HP(k[G]/k)\to \prod_{[g]\in\con
G}HP^{[g]}(k[G]/k)$. Projecting onto the conjugacy classes of
elements of finite order and taking homology we obtain a
homomorphism
\begin{equation}\label{map:projecthp}
HP_n(k[G]/k)\to \prod_{[g]\in\con_fG}HP^{[g]}_n(k[G]/k)
\end{equation}
Now for $g$ of finite order $C^{[g]}(k[G]/k)=H(Z_g,k)\weq
H(Z_g/\langle g\rangle,k)$, hence by \cite[Cor. 9.7.2]{chubu}
\[
HP^{[g]}_n(k[G]/k)=\prod_{m}H_{n+2m}(Z_g,k[pt])=\prod_{m}H_{n+2m}(Z_g,k[\cE(G,\fin)^g])
\]
One checks that the composite of the assembly map with the map
\eqref{map:projecthp} is the inclusion
\begin{gather*}
H_n^G(\cE(G,\fin),HP(k/k))=
\bigoplus_m
H_{n+2m}^G(\cE(G,\fin),HH(k/k))\\
\subset\prod_m
H_{n+2m}^G(\cE(G,\fin),HH(k/k))=\prod_m\bigoplus_{[g]\in\con_f G}
H_{n+2m}(Z_g,k[\cE(G,\fin)^g])\\
\subset \prod_m \prod_{[g]\in\con_fG}H_{n+2m}(Z_g,k[\cE(G,\fin)^g])
\end{gather*}
The first identity above is the case of $X=\cE(G,\fin)$ of Proposition \ref{prop:hpcompu} and the second follows from \eqref{map:equihomox}.
\end{proof}

\section{Equivariant Connes-Karoubi Chern character}\label{sec:chern}
\numberwithin{equation}{section}

In this section we consider algebras over a field $k$ of
characteristic zero. Recall from \cite[\S8.2]{corel} that the
homotopy $K$-theory and periodic cyclic homology of a $k$-linear
category are related by a Connes-Karoubi Chern character
\begin{equation}\label{chern}
\xymatrix{ KH(\cC)\ar[r]^{ch}& HP(\cC/k)}
\end{equation}
In particular if $G$ is a group, $H\subset G$ a subgroup and $R$ a
unital $k$-algebra we have a map of $\org$-spectra
\[
ch:KH(R[\cG^G(G/H)])\to HP(R[\cG^G(G/H)]/k)
\]
By \cite[Lemma 3.2.6]{corel}, this map is equivalent to the Chern
character
\[
ch:KH(R[H])\to HP(R[H]/k)
\]
for each fixed $H$. Using excision, all this extends to an arbitrary
nonunital algebra $A$ in place of $R$.
 We are interested in the
particular case when $k$ is either $\C$ or $\Q$ and $A\triqui \cB$
is an ideal in the algebra $\cB$ of bounded operators in a separable
complex Hilbert space. Let $p>0$; write $\cL^p\triqui\cB$ for the
Schatten ideal of those compact operators whose sequence of singular
values is $p$-summable. Let $H\subset G$ be a subgroup and $Tr:\cL^1
\to \C$ the operator trace. The map of cyclic modules
\begin{gather*}
Tr:C(\widetilde{\cL^1}_\C[\cG^G(G/H)]:\cL^1[\cG^G(G/H)]/\C)\to C(\C[\cG^G(G/H)]/\C)\\
Tr(a_0\otimes g_0\otimes\dots\otimes a_n\otimes g_n)=Tr(a_0\cdots
a_n)g_0\otimes\dots\otimes g_n
\end{gather*}
induces a natural transformation of $\org$-chain complexes
\begin{equation}\label{map:trace}
Tr:HP(\cL^1[\cG^G(G/H)]/\C)\to HP(\C[\cG^G(G/H)]/\C)
\end{equation}

\begin{prop}\label{prop:yug}
Let $X$ be a $(G,\fin)$-complex and $\cL^1\triqui\cB$ the ideal of
trace class operators. Then the composite
\[
\xymatrix{c:H^G(X,KH(\cL^1))\otimes\C\ar[r]^(.55){ch}&
H^G(X,HP(\cL^1/\C))\ar[r]^{Tr}& H^G(X,HP(\C/\C))}
\]
is an equivalence.
\end{prop}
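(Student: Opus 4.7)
The plan is a two-step reduction: first reduce from arbitrary $(G,\fin)$-complexes $X$ to a single orbit $G/H$ with $H\in\fin$, then reduce from $H$ finite to the trivial group, and finally invoke known $K$-theoretic computations for $\cL^1$ together with Connes' Hochschild calculation for trace-class operators.

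Both $X\mapsto H^G(X,KH(\cL^1))\otimes\C$ and $X\mapsto H^G(X,HP(\C/\C))$, viewed as functors on $G$-simplicial sets, are equivariant homology theories: this is a standard property of the coend \eqref{dlequihomo} combined with finite additivity and excision for $KH$ and $HP$. Since $c$ is a natural transformation of such theories, a cellular induction on $X$ (using Mayer--Vietoris for cellular pushouts and continuity for filtered colimits of subcomplexes) reduces the problem to showing that $c$ is an equivalence on each orbit $X=G/H$ with $H\in\fin$. Applying \eqref{intro:cross}, Remark \ref{arnuni} and the Morita part of Standing Assumption \ref{stan}(iii) to both spectra, this specialises to the assertion that
\[
c_H\colon KH(\cL^1[H])\otimes\C\overset{ch}{\lra} HP(\cL^1[H]/\C)\overset{Tr}{\lra} HP(\C[H]/\C)
\]
is an equivalence.

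For the second reduction, since $H$ is finite and we work over $\C$, the group algebra $\C[H]$ is semisimple and splits as a finite product $\prod_\chi M_{n_\chi}(\C)$; correspondingly $\cL^1[H]=\bigoplus_\chi M_{n_\chi}(\cL^1)$ as an algebra with coordinate-wise multiplication. Standing Assumption \ref{stan}(v), applied to both $KH$ and $HP$, together with Morita invariance for each factor, identifies $c_H$ with a finite direct sum of copies of
\[
c_1\colon KH(\cL^1)\otimes\C\overset{ch}{\lra} HP(\cL^1/\C)\overset{Tr}{\lra} HP(\C/\C),
\]
so it suffices to prove $c_1$ is an equivalence.

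To finish, I would argue $c_1$ in two pieces. The operator trace $Tr\colon\cL^1\to\C$ is classically known to induce a quasi-isomorphism of Hochschild (hence of periodic cyclic) complexes, giving at once $Tr\colon HP(\cL^1/\C)\weq HP(\C/\C)$. It then remains to prove that the Connes--Karoubi character $ch\colon KH(\cL^1)\otimes\C\to HP(\cL^1/\C)$ is an equivalence. This is the substantive input and the main obstacle: it is not formal, and relies on the computations of the algebraic $K$-theory of $\cL^1$ from \cite{CT} and \cite{wodalg}, which together ensure that $KH_*(\cL^1)\otimes\Q$ exhibits the Bott-periodic pattern of $HP_*(\C/\C)$ and that $ch$ implements this rational isomorphism. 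Everything else in the argument is bookkeeping with the Davis--L\"uck machinery and the elementary representation theory of finite groups over $\C$.
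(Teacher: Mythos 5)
Your two reductions (to orbits $G/H$ with $H\in\fin$, then via Wedderburn/Maschke and Morita invariance to the trivial group) are sound and essentially match the paper's use of \cite[Lemma 3.2.6]{corel} and \eqref{maschke}. The gap is in the final step, where you split $c_1$ into two maps and claim each is separately an equivalence. The claim that the operator trace induces a quasi-isomorphism $HP(\cL^1/\C)\to HP(\C/\C)$ of \emph{algebraic} periodic cyclic complexes is not classical and is almost certainly false at the chain level you invoke: Connes' computation is of the \emph{continuous} Hochschild and cyclic homology of $\cL^1$ (projective tensor products), whereas algebraically $HH_0(\cL^1/\C)=\cL^1/[\cL^1,\cL^1]$ is strictly larger than $\C$, since by the commutator-structure results for operator ideals (Anderson, Dykema--Figiel--Weiss--Wodzicki) not every trace-zero trace-class operator is a finite sum of commutators. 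Consequently your second claim --- that $ch:KH(\cL^1)\otimes\C\to HP(\cL^1/\C)$ is an equivalence --- also has no support in \cite{CT} or \cite{wodalg}: those references compare algebraic with \emph{topological} $K$-theory; they do not compute the algebraic $HP$ of $\cL^1$ nor show that the algebraic Chern character hits it isomorphically. Knowing that source and target are abstractly isomorphic (even if that were established) does not show the specific map $ch$ is an isomorphism.

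The paper avoids both claims by proving only that the \emph{composite} $Tr\circ ch$ is an equivalence: it uses \cite[Thm.\ 6.2.1(iii)]{CT} to replace $KH(\cL^1[H])$ by $K^{\top}(\cL^1[H])$, notes that $HP(\C[H]/\C)=HP^{\top}(\C[H])$ since $\C[H]$ is finite dimensional, and then factors the composite through the \emph{topological} objects $HP^{\top}(\cL^1[H])$, where the trace is an equivalence by \cite[Prop.\ 17.2]{cumf} and $ch^{\top}$ is an equivalence by comparison with the corner inclusion $\C\to\cL^1$ and Bott periodicity. If you want to salvage your outline, you must route the last step through $K^{\top}$ and $HP^{\top}$ in this way rather than through the algebraic $HP(\cL^1/\C)$.
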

\begin{proof}
It suffices to consider the case $X=G/H$ with $H\in\fin$. By
\cite[Lemma 3.2.6]{corel}, we have a homotopy commutative diagram
with vertical equivalences
\[
\xymatrix{KH(\cL^1[H])\otimes\C\ar[d]^\wr\ar[r]&HP(\cL^1[H]/\C)\ar[r]\ar[d]^\wr&HP(\C[H]/\C)\ar[d]^\wr\\
KH(\cL^1[\cG^G(G/H)])\otimes\C\ar[r]&HP(\cL^1[\cG^G(G/H)]/\C)\ar[r]
&HP(\C[\cG^G(G/H)]/\C)}
\]
Because $H\in\fin$, $\C[H]$ is Morita equivalent to its center,
which is a sum of copies of $\C$ indexed by the conjugacy classes of
$H$:
\begin{equation}\label{maschke}
\C[H]\sim Z(\C[H])=\bigoplus_{\con(H)}\C
\end{equation}
Since the (periodic) cyclic homology of $\C$ as a $\C$-algebra and
as a locally convex topological algebra agree, it follows that the
map $HP(\C[H]/\C)\to HP^{\top}(\C[H])$ is the identity. Next recall from
\cite[Notation 5.1 and Theorem 6.2.1 (iii)]{CT} and that if $I\triqui \cB$ is an operator ideal with a Banach algebra structure such that the inclusion $I\subset \cB$ is continuous and $\hotimes$ is the projective tensor product, then for every locally convex algebra $A$
the comparison map $KH(I\hotimes A)\to K^{\top}(I\hotimes A)$ is an equivalence. In particular this applies when $I=\cL^1$
and $A=\C[H]$. Hence we have a homotopy commutative diagram with
vertical equivalences
\[
\xymatrix{
KH(\cL^1[H])\otimes\C\ar[d]^\wr\ar[rr]^{c}&&HP(\C[H]/\C)\ar[d]^\wr\\
K^{\top}(\cL^1[H])\otimes\C\ar[r]_{ch^{\top}}&HP^{\top}(\cL^1[H])\ar[r]_{Tr}&HP^{\top}(\C[H])}
\]
Here $ch^{\top}$ is the topological Connes-Karoubi Chern character.
Using \eqref{maschke} we have that $Tr$ is an equivalence by
\cite[Prop. 17.2]{cumf} and $ch^{\top}$ is an equivalence because of
\eqref{maschke} and of the commutativity of the following diagram

\[
\xymatrix{K^{\top}(\cL^1)\otimes\C\ar[r]^{ch^{\top}}&HP^{\top}(\cL^1/\C)\\
K^{\top}(\C)\otimes\C\ar[u]^{\wr}\ar[r]^{ch^{\top}}_{\sim}&HP^{\top}(\C/\C)\ar[u]^{\wr}}
\]

It follows that $c$ is an equivalence. This concludes the proof.
\end{proof}

\begin{cor}\label{cor:yug}
Let $X$ be a $(G, \fin)$ complex. Then, for every $p>0$ there is an equivalence
\[
c:H^G(X, KH(\cL^p)) \otimes \C \rightarrow H^G(X,HP(\C/\C))
\]
\end{cor}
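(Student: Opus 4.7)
The plan is to reduce the corollary to the case $p=1$ handled by Proposition \ref{prop:yug}. For any $p>0$, setting $q=\max(p,1)$, there is a continuous ring homomorphism $\cL^p\hookrightarrow\cL^q$ (when $p\leq 1$) or $\cL^q\hookrightarrow\cL^p$ (when $p\geq 1$). By functoriality this induces a natural map of $\org$-spectra between $KH(\cL^p[\cG^G(-)])$ and $KH(\cL^q[\cG^G(-)])$, and hence a map $\iota$ on equivariant homology. Composing with the equivalence $c$ of Proposition \ref{prop:yug} (in the $p\leq 1$ case) or with its inverse in the homotopy category (in the $p\geq 1$ case) produces the desired map $c$ for $\cL^p$; this will be an equivalence provided $\iota$ is.

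The central step is therefore to show that $\iota$ is an equivalence. By the coend formula \eqref{dlequihomo} together with \cite[Lemma 3.2.6]{corel}, this reduces pointwise at each orbit $G/H$ with $H\in\fin$ to the claim that the Schatten inclusion induces an equivalence $KH(\cL^p[H])\weq KH(\cL^q[H])$. Using \eqref{maschke} to write $\C[H]$ as a direct sum of copies of $\C$ up to Morita equivalence, and invoking finite additivity and Morita invariance of $KH$, this in turn reduces to showing $KH(\cL^p)\weq KH(\cL^q)$ as $\org$-spectra for $0<p\leq q$ — the natural, spectrum-level strengthening of the fact $KH_*(\cL^1)\cong KH_*(\cL^p)$ recalled in the introduction.

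To establish this last equivalence, I would route the argument through topological $K$-theory. When both $p,q\geq 1$ the ideals are Banach, so \cite[Thm. 6.2.1(iii)]{CT} yields natural equivalences $KH(\cL^r)\weq K^{\top}(\cL^r)$, while $K^{\top}(\cL^r)$ is insensitive to the Schatten exponent since all $\cL^r$ share the common $C^*$-completion in $\cB$. The chief technical obstacle is the quasi-Banach range $0<p<1$, where the Banach hypothesis fails; I would handle it by factoring the inclusion $\cL^p\hookrightarrow\cL^1$ through intermediate ideals and invoking refinements of the $KH\weq K^{\top}$ comparison from \cite{CT} and \cite{wodalg} that go beyond the Banach setting, or, equivalently, by showing that the relative quotient $\cL^1/\cL^p$ has trivial $KH$.
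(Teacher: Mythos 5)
Your overall strategy---reduce to $p=1$ by comparing $\cL^p$ with $\cL^{\max(p,1)}$ along the Schatten inclusions and then invoke Proposition \ref{prop:yug}---is the same as the paper's, but the key step, namely that the inclusion induces an equivalence of $\org$-spectra between $KH(\cL^p[\cG^G(-)])$ and $KH(\cL^q[\cG^G(-)])$, is not actually established by your argument. The route through topological $K$-theory is problematic twice over. Even for $p,q\ge 1$, the assertion that $K^{\top}(\cL^r)$ is ``insensitive to the Schatten exponent since all $\cL^r$ share the common $C^*$-completion'' requires an argument (density plus spectral invariance of $\widetilde{\cL^r}$ in the unitized compacts); the topological $K$-theory of a Banach algebra is not in general determined by a $C^*$-completion. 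More seriously, for $0<p<1$ you concede that the Banach hypothesis of \cite[Thm. 6.2.1(iii)]{CT} fails and only gesture at ``refinements beyond the Banach setting'' or at ``showing that $\cL^1/\cL^p$ has trivial $KH$'' without proving either; the cited references do not supply a comparison theorem for quasi-Banach ideals, so this is a genuine gap precisely in the range the corollary is meant to cover.

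The missing idea is elementary and handles all $p>0$ at once: by the H\"older property of Schatten ideals, $\cL^a\cdot\cL^b\subseteq\cL^c$ with $1/c=1/a+1/b$, so $(\cL^q)^n\subseteq\cL^{q/n}\subseteq\cL^p$ for $n$ large; that is, the quotient \emph{ring} $\cL^q/\cL^p$ (for $p<q$) is nilpotent. Since $KH$ is nilinvariant \cite{kh}, its value on this quotient vanishes, and the fibration sequence \eqref{Efibseq} applied to $0\to\cL^p\to\cL^q\to\cL^q/\cL^p\to 0$, together with Remark \ref{arnuni}, shows directly that the map of $\org$-spectra induced by the inclusion is an equivalence. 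This is the paper's one-line proof; it makes your pointwise reduction via \eqref{maschke} and Morita invariance, and the entire detour through $K^{\top}$, unnecessary.
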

\begin{proof}Because $\cL^1/\cL^p$ for $p<1$ and $\cL^p/\cL^1$ for
$p>1$ are nilpotent rings and $KH$ is nilinvariant (\cite{kh}), the maps $$KH(\cL^p[\cG^G(-)])\to
KH(\cL^1[\cG^G(-)])\quad (p<1)$$ and $$KH(\cL^1[\cG^G(-)])\to
KH(\cL^p[\cG^G(-)]) \quad(p>1)$$ are equivalences of $\org$-spectra by Remark  \ref{arnuni} and \eqref{Efibseq}. The
proof is now immediate from Proposition \ref{prop:yug}.
\end{proof}

\section{The $KH$-assembly map with
$\cL^p$-coefficients}\label{sec:kh}

\begin{thm}\label{thm:novikh}
Let $p>0$ and $G$ a group. Then the
rational assembly map
\[
H_*^G(\cE(G,\fin),KH(\cL^p))\otimes\Q\to KH_*(\cL^p[G])\otimes\Q
\]
is injective.
\end{thm}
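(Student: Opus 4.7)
My plan is to compare the $KH(\cL^p)$-assembly map with the $HP(\C/\C)$-assembly map via the equivalence $c$ of Corollary \ref{cor:yug}, and then invoke the injectivity established in Proposition \ref{prop:asshp}. The key input is that $c$ arises from a (zig-zag of) natural transformation(s) of $\org$-spectra
\[
c:KH(\cL^p[\cG^G(-)])\otimes\C\longrightarrow HP(\C[\cG^G(-)]/\C),
\]
obtained by composing the nilinvariance equivalence $KH(\cL^p)\weq KH(\cL^1)$, the Connes--Karoubi Chern character $KH(\cL^1)\to HP(\cL^1/\C)$, and the operator-trace map \eqref{map:trace}.

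Applying the coend construction $H^G(-,-)$ to this natural transformation, and using the identification $H^G(pt,E(R))=E(R[G])$ from \eqref{intro:cross}, one obtains the commutative square of spectra
\[
\begin{CD}
H^G(\cE(G,\fin),KH(\cL^p))\otimes\C @>{\assem}>> KH(\cL^p[G])\otimes\C \\
@V{c}VV @VV{c}V \\
H^G(\cE(G,\fin),HP(\C/\C)) @>>{\assem}> HP(\C[G]/\C)
\end{CD}
\]
whose horizontal arrows are the respective assembly maps (induced by $\cE(G,\fin)\to pt$). Commutativity is automatic from naturality of the coend in the coefficient spectrum.

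By Corollary \ref{cor:yug} the left vertical is an equivalence, and by Proposition \ref{prop:asshp} (applied with $k=\C$) the bottom horizontal is injective on homotopy groups. Consequently the top horizontal is injective on homotopy groups after $\otimes\C$, and since $\C$ is faithfully flat over $\Q$, injectivity over $\C$ is equivalent to injectivity over $\Q$, which is the claim. I do not anticipate any serious obstacle: all the substantive work has already been carried out in the earlier sections---Propositions \ref{prop:hpcompu} and \ref{prop:asshp} for the $HP$ side, Proposition \ref{prop:yug} and Corollary \ref{cor:yug} for the character side. The only verification required is that the map labelled $c$ on the right vertical, namely the composite $KH(\cL^p[G])\otimes\C\weq KH(\cL^1[G])\otimes\C\to HP(\cL^1[G]/\C)\to HP(\C[G]/\C)$, truly coincides with the value at $G/G$ of the $\org$-natural transformation used to define the left vertical; this is a tautology given how the coend treats the orbit $G/G=pt$.
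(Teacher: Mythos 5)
Your proposal is correct and follows essentially the same route as the paper: the same commutative square comparing the $KH(\cL^p)$- and $HP(\C/\C)$-assembly maps via the character $c$, with the left vertical an equivalence by Corollary \ref{cor:yug} and the bottom row injective by Proposition \ref{prop:asshp}, together with the standard reduction from $\otimes\Q$ to $\otimes\C$. Your extra remarks on naturality of $c$ as a transformation of $\org$-spectra and on the evaluation at $G/G$ just make explicit what the paper leaves implicit.
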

\begin{proof}
It suffices to show that the map tensored with $\C$ is injective. We
have a commutative diagram
\[
\xymatrix{ H_*^G(\cE(G,\fin),KH(\cL^p))\otimes\C\ar[d]_{c}^\wr\ar[r]&
KH_*(\cL^p[G])\otimes\C\ar[d]^{c}\\
H_*^G(\cE(G,\fin),HP(\C/\C))\ar@{ >->}[r]& HP_*(\C[G]/\C)}
\]
The vertical map on the left is an isomorphism by Corollary
\ref{cor:yug}; the bottom horizontal map is injective by Proposition
\ref{prop:asshp}. It follows that the top horizontal map is
injective. This concludes the proof.
\end{proof}

Let $\cS=\bigcup_{p>0}\cL^p$ be the ring of all Schatten operators. Because $\cS^2=\cS$, the ring $\cS$ is $K$-excisive by \cite[Thm. C]{sw} and \cite[Proof of Thm. 8.2.1]{CT} (see also \cite[Thm. 4]{wodalg}). We can now deduce the following result of Guoliang Yu.

\begin{cor}\label{cor:root}(\cite[Thm. 1.1]{yu}).
The rational assembly map
\[
H_*^G(\cE(G,\vcyc),K(\cS))\otimes\Q\to K_*(\cS[G])\otimes\Q
\]
is injective.
\end{cor}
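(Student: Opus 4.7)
The plan is to identify Yu's rational $K$-theoretic assembly map (with $\cS$-coefficients and $\vcyc$-family) with the rational $KH$-assembly map of Theorem \ref{thm:novikh} (with $\cL^p$-coefficients and $\fin$-family) via a chain of rational equivalences, so that injectivity of the latter (already established) transfers to the former.

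The chain has three links. First, transfer Theorem \ref{thm:novikh} from $\cL^p$-coefficients to $\cS$-coefficients: as observed in the proof of Corollary \ref{cor:yug}, for any $p<q$ the quotient $\cL^q/\cL^p$ is nilpotent, so nilinvariance of $KH$ yields $KH(\cL^p[\cG^G(-)]) \weq KH(\cL^q[\cG^G(-)])$ as $\org$-spectra. Taking filtered colimits over $p$ (which both $KH$ and the associated equivariant homology respect for rings with local units) produces $KH(\cS[\cG^G(-)]) \simeq KH(\cL^p[\cG^G(-)])$ and $KH_*(\cS[G]) \simeq KH_*(\cL^p[G])$. Second, change the family from $\fin$ to $\vcyc$ using \cite[Thm. 2.4]{blr}: for any coefficient ring $A$, the rational $KH$-assembly map with family $\fin$ is equivalent to the one with family $\vcyc$. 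Combining these two steps, the rational $KH$-assembly with $\cS$-coefficients and $\vcyc$-family is injective.

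The third link, comparison of $K$ with $KH$, is the crux. The natural transformation $K \to KH$ produces a commutative square
\[
\xymatrix{
H^G_*(\cE(G,\vcyc), K(\cS)) \otimes \Q \ar[r] \ar[d]_{\phi} & K_*(\cS[G]) \otimes \Q \ar[d] \\
H^G_*(\cE(G,\vcyc), KH(\cS)) \otimes \Q \ar@{ >->}[r] & KH_*(\cS[G]) \otimes \Q
}
\]
whose bottom row is rationally injective by the above. Injectivity of the top row thus follows once we know the left vertical $\phi$ is rationally injective, and by the coend description of $H^G$ this reduces in turn to injectivity of $K_*(\cS[H]) \otimes \Q \to KH_*(\cS[H]) \otimes \Q$ for each $H \in \vcyc$. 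I expect this to be the main obstacle. For finite $H$, Maschke semisimplicity of $\Q[H]$ reduces the question to the rational comparison $K_*(\cS) \to KH_*(\cS)$, which is an isomorphism by the operator ideal $K$-theory developed in \cite{CT} and \cite{wodalg}; for $H$ infinite virtually cyclic, Bass' fundamental theorem further reduces the claim to the finite case together with the vanishing of $NK_*(\cS) \otimes \Q$, expected from the stability of $\cS$ under tensoring with $M_\infty(\C)$.
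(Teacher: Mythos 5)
Your first two links (passing from $\cL^p$ to $\cS=\colim_p\cL^p$ via nilinvariance of $KH$ and filtered colimits, and changing the family from $\fin$ to $\vcyc$ via Bartels--L\"uck) are correct and are exactly what the paper does, the latter via \cite[Thm. 7.4]{blr}. The gap is in your third link, and it is twofold. First, the reduction ``by the coend description of $H^G$ this reduces to injectivity of $K_*(\cS[H])\otimes\Q\to KH_*(\cS[H])\otimes\Q$ for each $H\in\vcyc$'' is not valid: a map of $\org$-spectra that is injective on homotopy groups at every orbit need not induce an injection on $H^G_*(X,-)$. The coend \eqref{dlequihomo} is computed by an Atiyah--Hirzebruch-type spectral sequence whose $E^2$-page is Bredon homology of $X$ with coefficients in the $\org$-modules $\pi_*E(R[\cG^G(-)])$, and Bredon homology (a derived functor over the orbit category) does not preserve monomorphisms of coefficient systems; even rationally, the levelwise splittings of $\Q$-vector spaces you would need cannot be chosen naturally in $G/H$. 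To push injectivity through the coend you need either a \emph{natural} splitting or, as the paper arranges, an objectwise \emph{equivalence}. Second, even granting the reduction, your treatment of infinite virtually cyclic $H$ is not a proof: ``vanishing of $NK_*(\cS)\otimes\Q$ expected from stability under tensoring with $M_\infty(\C)$'' is an unsubstantiated claim, and general virtually cyclic groups require twisted Bass--Heller--Swan decompositions, not just the fundamental theorem for $\Z$.

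The paper closes both holes at once with a stronger input from \cite{CT}: since $\cS^2=\cS$ and the tensor product of operators sends $\cL^1\otimes\cS$ into $\cS$, the ideal $\cS$ is \emph{sub-harmonic} in the sense of \cite[Def. 6.5.1]{CT}, whence $K(A\otimes_\C\cS)\to KH(A\otimes_\C\cS)$ is a weak equivalence for \emph{every} H-unital $\C$-algebra $A$ (\cite[Thm. 8.2.5, Rmk. 8.2.6]{CT}). Taking $A=\C[H]$ and using the standing assumptions for $K$ and $KH$ gives an objectwise weak equivalence of $\org$-spectra $K(\cS[\cG^G(-)])\weq KH(\cS[\cG^G(-)])$, so the left vertical map in your square is an equivalence for every $G$-simplicial set $X$ --- no spectral sequence comparison and no case analysis over $\vcyc$ is needed. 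You should replace your third link by this citation; the rest of your argument then goes through.
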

\begin{proof} Because $\cS=\cS^2$ and the tensor product of operators $\cB\otimes\cB\to \cB(H\otimes H)\cong \cB$ sends $\cL^1\otimes \cS\to \cS$, the operator ideal $\cS$ is sub-harmonic in the sense of \cite[Def. 6.5.1]{CT}. Hence the map $K(A\otimes_\C \cS)\to KH(A\otimes_\C \cS)$ is an equivalence for every H-unital $\C$-algebra $A$ (\cite[Thm. 8.2.5, Rmk. 8.2.6]{CT}). Applying this when $A=\C[H]$ and using the fact that both $K$ and $KH$ satisfy \ref{stan}, we obtain an equivalence of $\org$-spectra $K(\cS[\cG^G(G/H)])\to
KH(\cS[\cG^G(G/H)])$ (see Remark \ref{arnuni}). Hence for every $G$-simplicial set $X$ we have a homotopy commutative diagram with vertical equivalences
\begin{equation}\label{findiag}
\xymatrix{H^G(X,K(\cS))\ar[d]^\wr\ar[r]^-{assem}&K(\cS[G])\ar[d]^\wr\\
H^G(X,KH(\cS))\ar[r]^-{assem}&KH(\cS[G])}
\end{equation}
On the other hand by \cite[Thm. 7.4]{blr} and \eqref{Efibseq2}, for every ring $A$ the map
$$\cE(G,\fin)\to\cE(G,\vcyc)$$ induces a weak equivalence
\[
H^G(\cE(G,\fin),KH(A))\weq H^G(\cE(G,\vcyc),KH(A))
\]
Applying this to $A=\cS$ and using Theorem \ref{thm:novikh} we obtain that when $X=\cE(G,\vcyc)$ the bottom horizontal arrow in \eqref{findiag} --and thus also the top arrow--  is a rational equivalence.
\end{proof}

\bibliographystyle{plain}

\end{document}